\newtheorem{theorem}{Theorem}
\newtheorem{lemma}[theorem]{Lemma}
\newtheorem{corollary}[theorem]{Corollary}
\newcommand{\by}{\hspace{0.02in}{\scriptstyle{\square}}\hspace{0.02in}}
\newcommand{\noin}{\noindent}
\newcommand{\qed}{\ \hfill \rule{1ex}{1ex}\vspace{0.1in}} %rightjustified box
\newenvironment{proof}{{\noin \bf Proof}: }{\qed}
\newcommand{\zombie}{{\scalebox{0.60}{$\mathcal{Z}$}}}
\begin{document}

\title{The Game of Zombies and Survivors on Cartesian Products of Graphs}

\author{S.L. Fitzpatrick\footnote{School of  Mathematical and Computational Sciences, University of Prince Edward Island}}

\maketitle

\begin{abstract}
We consider the game of Zombies and Survivors as introduced by Fitzpatrick, Howell, Messinger and Pike (2016) This is a variation of the game Cops and Robber where the zombies (in the cops' role) are of limited intelligence and will always choose to move closer to a survivor (who takes on the robber's role). The zombie number of a graph is defined to be the minimum number of zombies required to guarantee the capture of a survivor on the graph.  In this paper, we show that the zombie number of the Cartesian product of $n$ non-trivial trees is exactly $\lceil 2n/3 \rceil$.  This settles a conjecture by Fitzpatrick et. al. (2016) that this is the zombie number for the $n$-dimensional hypercube.  In proving this result, we also discuss other variations of Cops and Robber involving active and flexible players.  \end{abstract}

\section{Introduction}

\subsection{Active Cops, Flexible Robbers and Zombies}

 The game of {\it Cops and Robber} is played on a graph with  two players: a set of {\it cops} and a single {\it robber}. Initially, each cop chooses a vertex to occupy, and then the robber chooses a vertex to occupy.   The cops and robber then alternate moves.  For the cops' move, each of the cops either moves to an adjacent vertex or stays at her current location (referred to as a {\it pass}).  The robber's move is defined similarly.  The pair of moves by the cops and the robber is referred to as a {\it round}, with the initial choice of positions considered to be round zero. The cops win if, after some finite number of rounds, a cop occupies the same vertex as the robber.  Otherwise, the robber wins. 
 For a graph $G$, the {\it cop number}, $c(G)$, is the minimum number of cops required to win in  $G$.  For a review of the game of Cops and Robber see \cite{AMS, book}.

In the game of Cops and Robber, both the cops and the robber have the option to pass on any move.  A variation of the game have been consider where both the cops and robber are {\it active}.    In the active game, as introduced by Aigner and Fromme \cite{AF},  both the robber and a non-empty subset of the cops move to adjacent vertices on their respective turns. This variation was  investigated further by To\u{s}i\'{c} \cite{Tosic} and Neufeld \cite{N}.    in \cite{Neu}, Neufeld and Nowakowski showed that $c(G) - 1 \le c'(G) \le c(G)$, where $c(G)$ and $c'(G)$ are the minimum number of cops needed to guarantee capture the robber in the passive and active games, respectively.  

In \cite{OO}, Offner and Ojakian look at designating particular individuals (cops or robber) as active or flexible.  If an individual is active, then they must move to an adjacent vertex on their turn.  If an individual is flexible, then they can either move to an adjacent vertex or pass on their turn.   Under this model, we can, prior to the game, designate a particular subset of cops as active (leaving the remaining cops flexible) and designate whether the robber is active or flexible.  

We are interested in either all of the cops being flexible or all of the cops being active.  We let $c_{ff}(G)$, $c_{fa}(G)$ denote the cop-number of when all of the cops are flexible and the robber is either flexible or active, respectively.   Similarly, $c_{af}(G)$ and $c_{aa}(G)$ denote the corresponding cop-numbers where all of the cops are active.

Note that giving cops the ability to pass can only make it easier for the cops to capture the robber, while giving the robber the ability to pass can only make it more difficult for the cops.  Therefore,  $c_{fa}(G) \le c'(G) \le c_{aa}(G) \le c_{af}(G)$ and $c_{fa} \le c_{ff}(G)  = c(G) \le c_{af}(G)$.

In the game of Zombies and Survivor was introduced in \cite{FHMP}.  In this variation of Cops and Robber, the zombies take on the role of the cops, while the survivor takes on the role of the robber.    The zombies are active, with the added restriction that each zombie must move closer to the survivor.  That is, each zombie must move along a geodesic connecting himself and the survivor.  The survivor acts exactly like a flexible robber.  We let $\zombie (G)$ denote the minimum number of zombies required to guarantee the capture of the survivor.  It follows that  $c_{af}(G) \le \zombie (G)$, since the game of zombies and survivor is equivalent to active cops versus a flexible robber in which additional restrictions are placed on the cops.

\subsection{Retracts and Copnumber}

A {\it retraction} of $G$ to its subgraph $H$ is a homomorphism $f:G \rightarrow H$ such that $f$ is the identity function when restricted to $H$.    The subgraph $H$ in this case is referred to as a {\it retract} of $G$.  We can also define a {\it weak retraction} based on the idea of a {\it weak homomorphism}.  While a homomorphism maps adjacent vertices in $G$ to adjacent vertices in $H$, a weak homomorphism allows for adjacent vertices in $G$ to be mapped to the same vertex in $H$.    Based on a weak homomorphism, weak retraction and weak retract are defined accordingly.  Obviously, every retraction is also a weak retraction, and  if $G$ is a reflexive graph, every weak retraction is also a retraction.  

It is well known that $c(H) \le c(G)$ whenever $H$ is a retract of a reflexive graph $G$ \cite{Ber}.  However, given that we want to work with simple graphs, we restate this result as $c(H) \le c(G)$ whenever $H$ is a weak retract of a simple graph $G$.    In Section \ref{sec:trees}, we show that similar results hold for the parameters $c_{aa}$ and $c_{af}$ when $H$ is a retract of $G$.

\subsection{Cartesian Product}
The Cartesian product $G \by H$ of graphs $G$ and $H$ has vertex set $V(G \by H) = V(G) \times V(H)$, with the edge set $E(G \by H)$ consisting of edges $(u,v)(x,y)$ such that $u=x$ and $vy \in E(H)$, or $ux \in E(G)$ and $v=y$.    It is well known that (1) the Cartesian product of graphs $G$ and $H$ is connected if and only if both $G$ and $H$ are connected, and (2) the distance between any pair of vertices $(u,v)$ and $(x,y)$ in $G \by H$ is given by $d_{G \by H}( (u,v),(x,y)) = d_G(u,x) + d_H(v,y)$.  

For a collection of  graphs, $G_1, \ldots , G_k$ we accordingly define the Cartesian product  $\displaystyle{ \by_{i=1}^k G_i}$ to have vertex set  $V\left ( \by_{i=1}^k G_i \right) = \{(x_1,  \ldots, x_k): x_i \in V(G_i) , 1 \le i \le k\}$, where $d_{ \by_{i=1}^k G_i}((x_1, \ldots x_k), (y_1, \ldots y_k) )= \sum_{i=1}^k d_{G_i} (x_i,y_i)$.  We define $d_{G_i} (x_i,y_i)$ to be the distance between $(x_1, \ldots x_k)$ and  $(y_1, \ldots y_k)$ in the $i^{th}$ coordinate.

Suppose we are playing a game of zombies and survivors on the graph product graph $ \by_{i=1}^k G_i$, which we will call $G$.   When a player moves from vertex $(x_1, \dots, x_k)$ to an adjacent vertex $(y_1, \ldots, y_k)$ we note that for some $i \in \{1, \ldots , k\}$,  $x_i y_i \in E(G_i)$ and $x_j = y_j$ for all $j \neq i$.  In this case, we will say that the player has moved in coordinate $i$.   We also say that the projection of the player in $G_i$ has moved to an adjacent vertex (in $G_i$),  and for any $j \neq i$, the projection of the player in $G_j$ has passed (in $G_j$).    As a result, we can think of the game of zombies and survivors in $G$ corresponding to  {\it shadow games} in each of $G_1, \ldots , G_k$.   Since each zombie will pass in $k-1$ of the shadow games in each round, the shadow games do not  adhere to the rules for the Zombies and Survivors game.  They do, however, follow the rules of the traditional game of Cops and Robbers.

It was shown in \cite{MM} that the cop-number of the Cartesian product of $k$ non-trivial trees is $\left \lceil (n+1)/2 \right \rceil$. That is, $c(\by_{i=1}^k T_i ) = \left \lceil (n+1)/2 \right \rceil$.    For the active game,   it was was shown in \cite{Neu} that $c'(\by_{i=1}^n T_i) = \left \lceil n/2 \right \rceil$.     Finally, for the game in which all of the players are active, it was shown in \cite{OO} that for the n-dimensional hypercube, $Q_n$ (the Cartesian product of $n$ paths of length one), $c_{aa}(Q_n) = \left \lceil 2n/3 \right \rceil$.  

In Section \label{sec:trees}, we show that the zombie-number for the Cartesian product of $k$ non-trivial trees is $ \left \lceil 2n/3 \right \rceil$.  In doing so, we use techniques similar to those in to obtain results listed above.  This includes considering the parity of the distance between zombie and survivor as in  \cite{MM},  and using the ``home coordinate" approach taken in \cite{OO}.

\section{Zombie Number of the Cartesian Products of Trees}\label{sec:trees}

In this section, we show that for any set $\{T_1, T_2, \ldots , T_n\}$ of $n$ non-trivial trees,  $\zombie (\by_{i=1}^n T_i) = \lceil 2n/3 \rceil$.   We begin by establishing a more general result regarding retracts.

\begin{lemma}\label{retract}
If $G$ is a simple graph and $H$ is a retract of $G$, then $c_{aa}(H) \le c_{aa}(G)$ 
and $c_{af}(H) \le c_{af}(G)$. \end{lemma}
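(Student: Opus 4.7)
The plan is to lift a winning strategy from $G$ to $H$ via the retraction, which is the standard approach for cop-number retract inequalities, adapted here to the active setting. Let $f : G \to H$ denote the retraction and set $k := c_{aa}(G)$; the argument for $c_{af}$ will be identical apart from the robber's status. The goal is to exhibit a winning strategy for $k$ active cops against an active robber on $H$.

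First I would set up a phantom game on $G$. The phantom cops play a winning $k$-cop strategy for $c_{aa}(G)$ on $G$, while the real cops on $H$ are always placed at $f(c_1), \dots, f(c_k)$, where $c_1, \dots, c_k$ denote the current phantom positions. When the real robber chooses an initial vertex $r \in V(H)$, I declare $r$ to be the phantom robber's initial vertex in $G$ as well (legal since $V(H) \subseteq V(G)$). In each subsequent round, the phantom robber copies the real robber's move (active if the real robber is active, flexible if flexible; legal in $G$ because $H$ is a subgraph of $G$), and the phantom cops respond according to the $G$-strategy; the real cops then move to the $f$-images of the updated phantom cop positions.

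The key verification is that the induced cop moves on $H$ are legal \emph{active} moves. Each phantom cop moves along an edge $c_i c_i' \in E(G)$, and because $f$ is a homomorphism between simple graphs, $f(c_i)f(c_i') \in E(H)$ with $f(c_i) \ne f(c_i')$; hence each real cop takes an active step in $H$. This is the only place where I need $f$ to be a genuine homomorphism rather than merely a weak one. Capture then transfers at once: by induction the phantom robber always sits in $V(H)$, so if some phantom cop catches him at a vertex $v \in V(H)$, the corresponding real cop is at $f(v) = v$ and captures the real robber.

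The main obstacle is really just this active-move verification; everything else is bookkeeping. The contrast with the classical inequality $c(H) \le c(G)$ is illuminating: there, weak retracts suffice because flexible cops can pass when adjacent vertices collapse under $f$, whereas here we need honest edge-preservation to guarantee that active cops always move.
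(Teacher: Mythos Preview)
Your proof is correct and follows exactly the approach the paper intends: the paper explicitly omits the argument, noting only that it is the standard Berarducci--Intrigila retract proof with the single additional observation that a genuine homomorphism (as opposed to a weak one) on a simple graph sends edges to edges with distinct endpoints, so projected cop moves remain active. Your write-up spells out precisely this, including the contrast with the weak-retract case, so there is nothing to add.
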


We omit the proof, as it is exactly the argument used in \cite{Ber} to prove that $c(H) \le c(G)$ for any weak retract $H$ of  $G$.  When $H$ is a weak retract of $G$, the associated weak homomorphism allows adjacent vertices in $G$ to be mapped to the same vertex in $H$.  Thus, given a move between two distinct vertices in $G$, the image of that move under the retraction could be a pass in $H$.  However, since $G$ is simple, a homomorphism would translate a move between two distinct vertices in $G$ into a move between distinct vertices in the retract.

We now show that $ \lceil 2n/3 \rceil$ is a lower bound on $\zombie (\by_{i=1}^n T_i)$.  This is done by demonstrating that the Cartesian product of $n$ trees has a retract isomorphic to $Q_n$.

\begin{lemma}\label{lowerbound}
If $\{T_1, \ldots , T_n\}$ is a set of $n$ non-trivial trees, then $\zombie \left ( \by_{i=1}^n \, T_i \right ) \ge \left \lceil 2n/3 \right \rceil$.
\end{lemma}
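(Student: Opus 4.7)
The plan is to chain together the inequality $\zombie(G) \ge c_{af}(G)$, the retract monotonicity in Lemma~\ref{retract}, and the Offner--Ojakian bound $c_{aa}(Q_n) = \lceil 2n/3\rceil$. Specifically, since a flexible robber can only be harder to catch than an active one, $c_{aa}(G) \le c_{af}(G)$ for every graph, so it suffices to exhibit $Q_n$ as a (strong, not merely weak) retract of $\by_{i=1}^n T_i$; then
\[
\lceil 2n/3\rceil \;=\; c_{aa}(Q_n) \;\le\; c_{af}(Q_n) \;\le\; c_{af}\!\left(\by_{i=1}^n T_i\right) \;\le\; \zombie\!\left(\by_{i=1}^n T_i\right).
\]

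The key step is therefore constructing, for each non-trivial tree $T_i$, a strong retraction onto a single edge $u_iv_i \in E(T_i)$. The obvious ``map every vertex to the nearer endpoint of $u_iv_i$'' recipe is only a \emph{weak} retraction (two neighbors in the same component of $T_i - u_iv_i$ would both map to the same endpoint), which would not suffice in view of the discussion following Lemma~\ref{retract}. Instead I would exploit the fact that every tree is bipartite: fix a bipartition $(A_i, B_i)$ of $T_i$ with $u_i \in A_i$ and $v_i \in B_i$, and define $r_i: T_i \to \{u_i,v_i\}$ by $r_i(A_i)=\{u_i\}$, $r_i(B_i)=\{v_i\}$. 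Since every edge of $T_i$ crosses the bipartition, adjacent vertices go to the distinct endpoints $u_i, v_i$, so $r_i$ is an honest graph homomorphism, and it is the identity on $\{u_i,v_i\}$.

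Next I would verify that the coordinatewise product $r(x_1,\dots,x_n) = (r_1(x_1),\dots,r_n(x_n))$ is a strong retraction of $\by_{i=1}^n T_i$ onto $\by_{i=1}^n \{u_iv_i\} \cong Q_n$. This is routine: any edge of the product differs in exactly one coordinate, and applying the corresponding $r_i$ in that coordinate yields a genuine edge of $K_2$, while the other coordinates are fixed, so the image is an edge of $Q_n$. Applying Lemma~\ref{retract} with $H = Q_n$ gives $c_{af}(Q_n) \le c_{af}(\by_{i=1}^n T_i)$, and the displayed chain above completes the argument.

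The only real obstacle is recognizing that the naive retraction of a tree onto one of its edges is weak rather than strong, and realizing that bipartiteness repairs this. Everything else is bookkeeping: checking that the product of strong retractions is strong, and concatenating the known inequalities between the various cop numbers and $\zombie$.
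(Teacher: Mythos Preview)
Your proposal is correct and is essentially identical to the paper's own proof: the paper also retracts each $T_i$ onto a chosen edge via the bipartition (phrased there as a proper $2$-colouring), takes the coordinatewise product to exhibit $Q_n$ as a retract of $\by_{i=1}^n T_i$, and then applies the chain $\lceil 2n/3\rceil = c_{aa}(Q_n) \le c_{af}(Q_n) \le c_{af}(G) \le \zombie(G)$ via Lemma~\ref{retract} and the Offner--Ojakian result. Your extra remark distinguishing the bipartition map from the (merely weak) nearest-endpoint map is a nice clarification not made explicit in the paper.
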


\begin{proof}
Consider any set of non-trivial trees $\{T_i:i=1, \ldots , n\}$.  Let $G = \by_{i=1}^n T_i$.  For each $i=1, \ldots , n$, let $x_iy_i$ be an edge in $T_i$.  Let $H$ be the subgraph of $G$ induced by $\{x_i,y_i : i=1, \ldots , n \}$.  It follows that $H \cong Q_n$.  

Next, suppose a proper 2-colouring is applied to each $T_i$.  For each $i = 1, \ldots  n$, let $f_i:V(T_i) \rightarrow \{x_i,y_i\}$ be defined as follows: $f_i$ maps each vertex in $T_i$ to the vertex in $\{x_i, y_i\}$  with which it shares a colour class.  Note that $f_i$ is the identity when restricted to $\{x_i, y_i\}$.
It follows that $F:V(G) \rightarrow V(H)$, defined by $F((z_1, \ldots , z_n) ) = (f_1(z_1), \ldots , f_n(z_n))$ is a retraction from $G$ to $H$.  

Since $H$ is a retract of $G$ and $H \cong Q_n$, it follows from Lemma \ref{retract} that $c_{af}(Q_n) \le c_{af}(G)$.  Furthermore, it was shown in \cite{OO} that $c_{aa}(Q_n) = \lceil 2n/3 \rceil$.  Since $c_{aa}(Q_n) \le c_{af}(Q_n) \le  c_{af}(G) \le \zombie (G)$, it follows that $\zombie (G)  \ge \lceil 2n/3 \rceil $.

\end{proof}

%\begin{comment} Let $k= \zombie (G)$ and suppose a game of Cops and Robber is played on $H$ where there are $k$ active cops versus one flexible robber.   A simultaneous game of Zombies and Survivor  is played with $k$ zombies on $G$.  In this second game, the survivor's movements on $G$ will be identical to those on $H$.  The player controlling the robber (and  the survivor) has a primary goal of avoiding capture in $H$.  The player controlling zombies $Z_1, \ldots , Z_k$ on $G$ will move the cops $C_1, \ldots , C_k$ so that the position of $C_i$ in $H$ is the image of the position of zombie $Z_i$ in $G$ under $F$.   The player controlling the zombies (and cops) has the primary goal of capturing the survivor on $G$.    Since $k= \zombie (G)$ it follows that the zombies eventually capture the survivor on $G$.  Since  $F$ is the identity on $H$, it follows that whenever the zombie has captured the survivor on $G$, the corresponding active cops have captured the flexible robber on $H$.  Hence $c_{af}(H) \le \zombie (G)$.  \end{comment}

We now have the task of showing that  $\zombie \left ( \by_{i=1}^n \, T_i \right ) \le \left \lceil 2n/3 \right \rceil$.  This is proved by induction on $n$ in Theorem \ref{upperbound}.  A series of results precede this, beginning with the base cases of $n=2$ and $n=3$.  

\begin{lemma}\label{3trees}
For any non-trivial trees $T_1$, $T_2$ and $T_3$, $\zombie (T_1 \by T_2 \by T_3) = 2$.
\end{lemma}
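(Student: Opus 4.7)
The lower bound $\zombie(T_1 \by T_2 \by T_3) \geq 2$ is immediate from Lemma \ref{lowerbound} with $n = 3$, so what remains is to produce a winning two-zombie strategy.

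My approach would use the shadow-game framework set up in the preamble: for each $i$, a zombie's projection in $T_i$ takes one step along the unique geodesic toward the survivor's projection on rounds where the zombie chooses coordinate $i$, and passes in coordinate $i$ otherwise. Because each $T_i$ is a tree, a shadow cop that follows its geodesic catches the shadow survivor in bounded time provided it moves often enough; this is the key engine driving the argument.

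The strategy I would propose is a \emph{home-coordinate} scheme inspired by Offner--Ojakian's argument for $Q_n$. Place both zombies initially at a common vertex $(\ell_1, \ell_2, \ell_3)$, where each $\ell_i$ is a leaf of $T_i$. Designate zombie $1$'s home coordinate as $1$ and zombie $2$'s home coordinate as $2$, with coordinate $3$ shared. Each zombie's move rule is: move in its home coordinate whenever the shadow distance there is positive; otherwise, move in coordinate $3$ if possible; otherwise, in the remaining coordinate. This plan yields two natural sub-phases. In Phase I, each zombie reaches its home-coordinate match by pure tree-cop pursuit. In Phase II, the survivor can force at most one of the two zombies to re-match in its home coordinate in any single round (since the survivor moves in at most one coordinate per round), so the other zombie advances its shadow in $T_3$. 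A standard bookkeeping in the $T_3$ shadow game then shows that one zombie's shadow in $T_3$ reaches the survivor's shadow in bounded time.

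The main obstacle is Phase III: at the moment a zombie matches in both its home coordinate and coordinate $3$, it is generally still unmatched in its off-home tree, so capture has not yet occurred. To close this gap I would combine a handover with a parity invariant based on the bipartition of each $T_i$. In the handover, once one zombie attains two matches, it shifts focus to its off-home coordinate while the other zombie assumes maintenance of coordinate $3$; the parity invariant (tracked via the bipartition colours of $T_1, T_2, T_3$) controls exactly when the handover can successfully collapse the remaining mismatch. The delicate point will be ruling out a ``ping-pong'' evasion in which the survivor oscillates between coordinates to keep both zombies perpetually re-matching---here the two-moves-per-round advantage of the zombies against the survivor's single move is precisely what forces termination, in direct analogy with the bound $c_{aa}(Q_3) = 2$ from \cite{OO}.
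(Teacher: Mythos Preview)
Your Phase~III is where the argument actually breaks, and the gap is not cosmetic.  By placing both zombies on the \emph{same} vertex you force $d(Z_1,S)\equiv d(Z_2,S)\pmod 2$ for the entire game (each zombie moves once per round in a bipartite graph), so any ``parity invariant based on the bipartition of each $T_i$'' gives the two zombies identical information and cannot separate a capture configuration from a stalemate.  Concretely, once $Z_1$ matches in coordinates $1,3$ and $Z_2$ in $2,3$, the survivor can move in coordinate~$1$ away from $Z_2$'s projection: then $Z_1$ spends its turn re\-matching coordinate~$1$ while $Z_2$ merely keeps pace in coordinate~$1$, and neither $d_{T_1}(Z_2,S)$ nor $d_{T_2}(Z_1,S)$ decreases.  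Nothing you have written rules out the survivor sustaining this (alternating with the symmetric move in coordinate~$2$) for arbitrarily long stretches; the appeal to $c_{aa}(Q_3)=2$ from~\cite{OO} does not transfer, since there the robber is \emph{active} and each factor is $K_2$, so there is no room to run.  One could try to close this with a finiteness argument (the survivor eventually runs out of tree in $T_1$ or $T_2$), but that is a genuinely different argument you have not supplied, and it is delicate because the survivor can reset by switching coordinates.

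The paper's proof makes exactly the opposite choice at the outset: the two zombies begin on \emph{adjacent} vertices, so $d(Z_1,S)+d(Z_2,S)$ is odd and remains odd throughout.  Combined with a ``move in the coordinate of greatest distance'' rule (leftmost tie-break for $Z_1$, rightmost for $Z_2$), this odd parity is precisely what eliminates the balanced stalemate you are worried about: when the distance vectors have shrunk to $\{0,1\}$-entries, oddness of the sum forces one vector to $[0,0,0]$.  So the missing idea in your sketch is the adjacent (opposite-parity) start; with a common start vertex the parity tool you invoke is empty.
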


\begin{proof}
Let $G = {T_1 \by T_2 \by T_3}$.  We begin the game by placing zombies $Z_1$ and $Z_2$ on adjacent vertices in $G$.  It follows that the vertices occupied by $Z_1$ and $Z_2$ differ in exactly one coordinate, and the distance between the vertices in that particular coordinate is one.  The survivor $S$ then chooses a vertex in $G$.  For the remainder of the game the each zombie  moves according to the following rule:  the zombie moves in the coordinate in which the distance between himself and the survivor is greatest.  To break ties,  $Z_1$ choose the leftmost coordinate, while $Z_2$ chooses the rightmost coordinate.

We note that  each of the shadow games is taking place in a tree.  Furthermore, each zombie is moving to an adjacent vertex in exactly one of the shadow games in each round.  As a result, after some finite number of moves, the each zombie will capture the survivor in at least one shadow game.  Suppose we are at the mid-point of a round in which this has occurred, but the zombies have not yet captured the survivor in $G$.

We now introduce a  distance vector  $v_1 = [a_1,a_2,a_3]$, where $a_j$ denotes the distance between $Z_1$ and $S$ in coordinate $j$.  Distance vector $v_2 = [b_1,b_2,b_3]$ representing the distances between $Z_2$ and $S$ is defined similarly.   For each of $v_1$ and $v_2$, at least one of the entries in the vector is 0.  It follows that the other entries are all at most 1, given the rule that the zombies must move in the coordinate where the distance is greatest.  That, together with the rule for breaking ties, tells us that $v_1$ is either $[0,1,1]$ or $[0,0,1]$, and $v_2$ is either $[1,1,0]$ or $[1,0,0]$.    Due to the rules for the zombies' play, the possible values of $v_1$ and $v_2$ will remain the same when measured at the mid-point of each subsequent round, with the only exception occurring when $S$ is captured and either $v_1$ or $v_2$ is the zero-vector.

Let $|v_1|$ and $|v_2|$ denote the sum of their respective entries.   We claim that, due to the choice of initial positions for $Z_1$ and $Z_2$, $|v_1| + |v_2|$ is odd.   

\medskip

\noindent {\it Proof of Claim:}  For convenience, we will associate each player with the vertex they occupy and let $d(x,y) = d_G(x,y)$.  It follows that at the end of the first round, $|d(Z_1,S) - d(Z_2, S)| = 1$.  Therefore, $d(Z_1,S) + d(Z_2, S)$ is odd at the end of round 1.  Now, since each factor of $G$ is a tree, when the survivor moves to an adjacent vertex, his distance from $Z_1$ changes by +1 or -1 in exactly one coordinate.  Therefore, $d(Z_1,S)$ either increases or decreases by one.  Similarly for $d(Z_2,S)$.  Hence, $d(Z_1,S) + d(Z_2, S)$ will remain odd if $S$ either moves to an adjacent vertex or passes.  Now, when the zombies move, each of $d(Z_1,S)$ and $d(Z_2,S)$ will decrease by exactly one.  Hence, at any point in the game, the sum $d(Z_1,S) + d(Z_2, S)$ is odd.  Hence, $|v_1| + |v_2|$ is odd. 

\medskip

With the claim proved, we may assume, without loss of generality, that we are at the mid-point of some round where $v_1 = [0,0, 1]$ and $v_2 = [1,1,0]$.   
We now consider the survivor's play.  If the survivor moves sufficient number of times in the 3rd coordinate, $Z_1$ will capture $S$.  This is due to the fact that there will be sufficient play in the shadow game on $T_3$ for $Z_1$ to capture $S$ in $T_3$.  It follows that, at some point in the game, $S$ must move indefinitely in the first two coordinate (and cannot pass).

This means  $Z_2$ will capture $S$ in one of the shadow games in $T_1$ or $T_2$. Due to the tie-breaking rule, $Z_2$ captures $S$ in the shadow game on $T_2$.   It follows that $v_2 = [1,0,0]$.  Since $|v_1|+|v_2|$ is odd, it follows that $v_1 = [0,1,1]$ or $v_1 = [0,0,0]$.  The former, however, is impossible since the distance between $Z_1$ and $S$ does not increase from the mid-point of one round to the next.  Hence, $v_1 = [0,0,0]$ and the survivor has been captured.  Hence $z(T_1 \by T_2 \by T_3) \le 2$.

Since $z(G) \ge c(G)$ and $c(T_1 \by T_2 \by T_3)  = \left \lceil (3+1)/2 \right \rceil =2$, the result follows.
\end{proof}

\begin{corollary}\label{2trees} For any non-trivial trees $T_1$ and $T_2$, $\zombie (T_1 \by T_2) = 2$.
\end{corollary}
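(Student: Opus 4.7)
The plan is to establish the two bounds separately, with the upper bound leveraging Lemma~\ref{3trees} by a slice-embedding argument. For the lower bound, Lemma~\ref{lowerbound} applied with $n = 2$ gives $\zombie(T_1 \by T_2) \ge \lceil 4/3 \rceil = 2$ directly; alternatively, the inequality $\zombie(G) \ge c(G)$ together with $c(T_1 \by T_2) = 2$ (from \cite{MM}) yields the same bound.

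For the upper bound, the plan is to reduce to the three-tree case. I would let $T_3$ be any non-trivial tree (for instance $K_2$), fix a vertex $v \in V(T_3)$, and consider $G' = T_1 \by T_2 \by T_3$. The subgraph of $G'$ induced by the set $\{(x_1, x_2, v) : x_i \in V(T_i)\}$ is isomorphic to $T_1 \by T_2$, and any game on $T_1 \by T_2$ can be lifted to a game on $G'$ in which the survivor never leaves this slice.

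I would then have the two zombies play the Lemma~\ref{3trees} strategy on $G'$: place them on adjacent vertices of the slice (which are also adjacent in $G'$), and in each round move along a geodesic in whichever coordinate records the greatest zombie-to-survivor distance, with $Z_1$ breaking ties by the leftmost coordinate and $Z_2$ by the rightmost. The main step is to verify by induction on the round that both zombies remain on the slice: whenever all three players lie on the slice, each zombie's coordinate-$3$ distance to the survivor is $0$, so (unless capture has already occurred) the greatest distance is attained in coordinate $1$ or $2$; the zombie therefore never moves in coordinate $3$, and the survivor, being confined to $T_1 \by T_2$, cannot move in coordinate $3$ either.

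This slice-invariance is the crux of the argument; everything else is bookkeeping. Once it is in place, Lemma~\ref{3trees} guarantees that the two zombies capture the survivor in $G'$, and because all positions stay on the slice, this capture is in fact a capture in $T_1 \by T_2$. Combined with the lower bound, this yields $\zombie(T_1 \by T_2) = 2$.
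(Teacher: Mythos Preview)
Your proof is correct and rests on the same reduction to Lemma~\ref{3trees} that the paper uses, but the packaging differs slightly. The paper simply takes $T_3$ to be the tree on a single vertex, so that $T_1 \by T_2 \by T_3 \cong T_1 \by T_2$ and the Lemma~\ref{3trees} argument applies verbatim; you instead keep $T_3$ non-trivial, embed $T_1 \by T_2$ as a coordinate-$3$ slice, and verify inductively that the Lemma~\ref{3trees} strategy never leaves that slice. Both routes hinge on the same observation---a zombie following the ``largest-distance coordinate'' rule never moves in a coordinate where the distance is already~$0$---so the underlying idea is identical. Your version has the mild advantage of invoking Lemma~\ref{3trees} exactly as stated (for non-trivial factors) rather than implicitly re-running its proof in a degenerate case, at the cost of the extra slice-invariance check; the paper's version is shorter but relies on the reader seeing that the Lemma~\ref{3trees} proof goes through unchanged when one factor is trivial.
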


\begin{proof}
Since $c(T_1 \by T_2)  = \left \lceil (2+1)/2 \right \rceil =2$, it follows that $\zombie (T_1 \by T_2) \ge 2$.  Furthermore, $\zombie (T_1 \by T_2) \le 2$ since $\zombie (T_1 \by T_2 \by T_3) \le 2$ will also hold when $T_3$ is the tree on a single vertex.
\end{proof}

We now introduce the {\it home strategy} for the zombies.  We begin by a particular vertex coordinate to each zombie.  This is referred to as the zombie's home coordinate.  For a given zombie, the home strategy would be as follows:  at the beginning of each round, the zombie will compare his position with the survivor's position.  Starting with the home coordinate, if their positions differ in that coordinate, then the zombie moves in that coordinate.  If not, the zombie  compares the next coordinate to the right and repeats the process.   Note that the coordinate  to the immediate right of the $k^{th}$ coordinate is defined to be the first coordinate.   As a result, the zombie moves in the  coordinate where the positions first differ, starting at the home coordinate and moving to the right.  Obviously, if they differ in no coordinate, the zombie has captured the survivor and the game is over.

We say that coordinate $i$ is {\it within reach} of a zombie if that zombie's position and survivor's positions are identical in all coordinates from the home coordinate to coordinate $i$, inclusive, as you move to the right of the home coordinate.
We  define the {\it reach} of the zombie to be the number of  coordinates that are within reach of that zombie.  Unless otherwise indicated, we measure the reach of each zombie at the midpoint of each round (following the zombies' move).   

We note that for each zombie, its reach is non-decreasing when it moves according to the home strategy.

\begin{lemma}\label{moretrees}
Let $G$ be the Cartesian product of any collection of $n$ non-trivial trees, such that $n \equiv 0$ or $2 \pmod 3$.  It follows that a collection $\lceil 2n/3 \rceil$ zombies are sufficient to win on $G$, provided exactly half of the zombies are an even distance from the survivor at the end of the first round.
\end{lemma}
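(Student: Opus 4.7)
The plan is to run the home strategy with the zombies grouped into pairs, and to apply the parity invariant from the proof of Lemma \ref{3trees} pairwise. Since $n \equiv 0$ or $2 \pmod 3$, the number $\lceil 2n/3 \rceil$ is even, so the hypothesis that exactly half of the zombies are at even distance from the survivor at the end of round one lets me match each even-distance zombie with an odd-distance one. By the same parity argument as in the claim inside the proof of Lemma \ref{3trees} (each zombie's move decreases its distance to the survivor by exactly one, and the survivor's move changes any such distance by $\pm 1$), the parity of $d(Z,S)$ is preserved for every zombie from round one onward, and therefore $d(Z,S) + d(Z',S)$ is odd in every subsequent round for every pair $(Z,Z')$.

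Next, I would assign home coordinates so that, when $n = 3k$, pair $i$ is given homes $3(i-1)+1$ and $3(i-1)+2$ for $i = 1,\ldots,k$, and when $n = 3k+2$, the first $k$ pairs are assigned as in the previous sentence and the final pair is given homes $n-1$ and $n$, producing doubled coverage of the last two coordinates but no uncovered coordinate. Every zombie then plays the home strategy, so by the remark preceding the lemma each zombie's reach is non-decreasing and bounded by $n$, hence converges to some limit $r_j$.

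The main step is to show $r_j = n$ for some $j$. Suppose for contradiction that $r_j < n$ for every $j$. From some round onward, every zombie's ``pressure coordinate'' $h_j + r_j$ has distance to the survivor that stays strictly positive; and because the zombie's move in the pressure coordinate cuts this distance by one, the only way to keep it positive is for the survivor's move to land in that coordinate (moving away) often enough. Since the survivor changes only one coordinate per round, this forces a tight matching of survivor moves to pressure coordinates, and within any pair $(Z,Z')$ whose two pressure coordinates both lie in its assigned three-coordinate block, the pair's restricted distance vectors collapse into exactly the $[0,0,1]$ and $[1,1,0]$ dichotomy analyzed in the proof of Lemma \ref{3trees}. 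The odd parity of $d(Z,S) + d(Z',S)$ then drives one of the two distances to zero, contradicting stability, so some $r_j = n$ and capture occurs.

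The main obstacle will be making the pairwise reduction fully rigorous: since the home strategy is cyclic, a zombie's reach may already have scanned past its pair's assigned three-coordinate block, and I need to confirm that the ``pressure-to-survivor-move matching'' argument genuinely reduces, pair by pair, to the three-coordinate situation of Lemma \ref{3trees} rather than leaking into other pairs' blocks. The $n = 3k+2$ case requires extra bookkeeping since the last pair shares coordinates with the penultimate pair, and I will need to verify that the doubled coverage of the final two coordinates is compatible with the per-pair parity-plus-pigeonhole argument.
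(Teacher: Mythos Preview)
Your approach differs substantially from the paper's, and the gap you yourself flag is real. The paper argues by induction on $n$: it gives both members of each pair the \emph{same} home coordinate $3i-2$, runs the home strategy only long enough to locate a single pair whose reach is at least $n-2$, and then \emph{switches} that pair off the home strategy entirely. From that point the distinguished pair mimics the survivor on the first $n-3$ coordinates and plays the max-distance-coordinate strategy of Lemma~\ref{3trees} on the last three; the remaining $2\ell-2$ zombies meanwhile re-home to coordinate $n-2$, eventually agree with the survivor on the last three coordinates, and then apply the inductive hypothesis on $\by_{i=1}^{\,n-3} T_i$.

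Your plan instead keeps every zombie on the home strategy for the whole game and tries to contradict stability of the reaches by collapsing each pair to the $[0,0,1]/[1,1,0]$ picture. But that dichotomy in Lemma~\ref{3trees} is a consequence of the specific ``move in the coordinate of maximum distance, break ties left/right'' rule, not of the home strategy. Under the home strategy a zombie with home $3i-2$ and reach $r\le 2$ has restricted distance vector of the shape $[0,\dots,0,d,\ast]$ with $d\ge 1$ and the trailing entries unconstrained, while its partner (to whom you give home $3i-1$) has a vector that need not vanish in the first block coordinate at all. Nothing in your argument forces these to become $[0,0,1]$ and $[1,1,0]$, so the odd-sum parity contradiction from Lemma~\ref{3trees} never fires; and once one pair's pressure coordinate escapes its own three-coordinate block, the ``pair-by-pair'' decomposition you are relying on breaks down, since the survivor's single move per round can simultaneously sit inside the reach of several zombies from different pairs. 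The missing idea is exactly the mid-game strategy switch together with the induction: the paper does not claim the home strategy by itself captures, only that it manufactures one pair positioned to run Lemma~\ref{3trees} on a three-coordinate factor while the rest of the team finishes on the complementary $(n-3)$-coordinate factor via the inductive hypothesis.
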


\begin{proof}
By Lemma \ref{3trees} and Corollary \ref{2trees} we see that result holds when $k=2$ or $n=3$.  We now proceed by induction.

Consider a set of non-trivial trees, $\{T_i : i=1, \ldots , n\}$, where $n=3\ell - 1$ or $n = 3\ell$ for some $\ell \ge 2$.  Let $G = \by_{i=1}^n T_i$.  Since $\lceil 2n/3 \rceil = 2\ell$, we consider a collection of  $2\ell$ zombies:  $Z_1, \ Z_2, \ldots , Z_{2\ell}$.  Each of the zombies selects an initial position in the game.  The survivor, $S$, then selects an initial position.  Assume that exactly half of the zombies are an even distance from $S$.  Without loss of generality, assume $Z_1, \ldots , Z_\ell$ are those at an even distance from $S$.   As in \ref{3tress}, this guarantees that for each $i=1, \ldots \ell$,  $d(Z_i,S) + d(Z_{\ell +i},S)$ is odd for the remainder of the game.

For each $i= 1, \ldots , \ell$, zombies $Z_i$ and $Z_{i+\ell}$ are both assigned coordinate $3i-2$ as their home coordinate.   Each zombie will initially play using the home strategy. 
Using the home strategy, we know that the reach of each zombie is non-decreasing. If we consider the total reach of the zombies to be the sum of the individual reaches, then the survivor wins only if the total reach remains constant for an indefinite number of rounds.  Furthermore, if the survivor either passes or moves in a coordinate that is not within reach of a zombie, $Z$,  in a sufficient number of rounds, the reach of $Z$ will increase.   Therefore, for the survivor to win, there must be some indefinite number of rounds in which the survivor does not pass and only moves in  coordinates that are within reach of every zombie.

Therefore, we may assume that at some point in the game, there is a coordinate that is within reach of every zombie.  Say it is the $w^{th}$ coordinate. If $n-2 \le w \le n$, then at most two coordinates, the $n-1^{st}$ and the $n^{th}$, are not within reach of $Z_1$.  If $w < n-2$, then $3t-2\le w \le 3t$ for some $t$ such that $1 \le t <\ell$, and at most two coordinates, $3t-1$ and $3t$,  are not within reach of $Z_{t+1}$.    It follows that, at some point in the game, for some $i$ such that $1 \le i \le \ell$, zombie $Z_i$ has at least $n-2$ coordinates within reach.   It follows that $Z_{\ell +i}$ also has at least $n-2$ coordinates within reach, and those coordinates possibly not within reach of $Z_i$ are the same as those possibly not within reach of $Z_{\ell +i}$.  

Without loss of generality, suppose both $Z_1$ and $Z_{\ell +1}$ have the first through the $(n-2)^{nd}$ coordinates within reach.    We may assume that the zombies have just completed their move, and it is the midpoint of some round.  The zombies $Z_1$ and $Z_{\ell +1}$ now play according to the following rule:  if $S$ moves in one of the first $n-3$ coordinates, $Z_1$ and $Z_{\ell +1}$ will both mimic that move in the next round, otherwise, $Z_1$ and $Z_{\ell +1}$ will move in the last three coordinates according to their winning strategy on the product $T_{n-2} \by T_{n-1} \by T_{n}$.   We note that at the midpoint of any round, exactly one of $Z_1$ and $Z_{\ell +1}$ is even distance from $S$.  Since the vertices of $Z_1$, $Z_{\ell+1}$ and $S$ are the same in the first $n-3$ coordinates, then in the shadow game on $T_{n-2} \by T_{n-1} \by T_{n}$, exactly one of  $Z_1$ and $Z_{\ell +1}$ is even distance from $S$.  This will remain the case at any point in the game, no matter how $S$ moves (or passes).  It follows that, if $S$ makes a sufficient number of moves in the last three coordinates, or passes, in subsequent rounds, $Z_1$ or $Z_{\ell +1}$ will eventually apprehend the robber.    We may, therefore, assume that $S$ makes an indefinite number of moves in the first $n-3$ coordinates.  

Meanwhile, each zombie in  ${\cal Z} = \{Z_i, Z_{\ell + i}: 2 \le i \le  \ell\}$
adopts $n-2$ as their new home coordinate and uses the home strategy.   It follows that, since $S$ makes an indefinite number of  moves in the first $n-3$ coordinates, every zombie in $\cal Z$ will eventually have the last three coordinates within reach.   Once this is accomplished, the zombies of $\cal Z$ adopt the following strategy: if $S$ moves in  one of the last three coordinates, each zombie  in $\cal Z$ mimics that move.  Otherwise, the $2\ell-2$ zombies move in the first $n-3$ coordinates, using their winning strategy on $\by_{i=1}^{n-3} T_i$.  Again, this is possible since exactly half of the zombies are an even distance from $S$ in the shadow game on $\by_{i=1}^{n-3} T_i$.  Since  $S$ makes an indefinite number of moves in the first $n-3$ coordinates, a zombie in $\cal Z$ eventually captures $S$.
\end{proof}

\begin{theorem}\label{upperbound}
For any set of $n$ non-trivial trees $\{T_i : i=1, \ldots , n\}$, where $n\ge 2$,  $\zombie ( \by_{i=1}^n T_i) = \left \lceil \frac{2n}{3} \right \rceil$.
\end{theorem}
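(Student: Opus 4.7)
The lower bound is exactly Lemma \ref{lowerbound}, so the content is the matching upper bound. My plan is to proceed by strong induction on $n$, using Corollary \ref{2trees} and Lemma \ref{3trees} to supply the base cases $n = 2$ and $n = 3$.

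For the inductive step with $n \equiv 0$ or $2 \pmod{3}$, I would invoke Lemma \ref{moretrees}. The only point to verify is its hypothesis that exactly half of the zombies are at even distance from the survivor after round $1$. Writing $2\ell = \lceil 2n/3 \rceil$, the zombies can commit in advance to placing themselves as $\ell$ pairs, each pair occupying a pair of adjacent vertices of the bipartite graph $G$. For any initial survivor position, each such pair contributes exactly one zombie in the survivor's bipartition class (even distance) and one in the opposite class (odd distance), giving exactly $\ell$ zombies at even distance. The parity computation from the proof of Lemma \ref{3trees} --- each zombie move decreases $d(Z,S)$ by $1$, and each survivor move or pass changes $d(Z,S)$ by $\pm 1$ or $0$ --- then shows each pair's distance-sum stays odd, so the hypothesis holds at the end of round $1$ and persists through play.

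For $n \equiv 1 \pmod{3}$, write $n = 3\ell + 1$, so $\lceil 2n/3 \rceil = 2\ell + 1$. My plan here is a sub-induction on $n$ within this residue class, mirroring the two-team structure in the proof of Lemma \ref{moretrees}. I would designate one adjacent pair $(Z_1, Z_{\ell+1})$ as a team responsible for the last three coordinates and the remaining $2\ell - 1$ zombies as a team responsible for the first $n-3$ coordinates. Using the home strategy, the pair first extends its reach over the first $n-3$ coordinates, then adopts the rule: mimic the survivor in coordinates $1, \ldots, n-3$; otherwise play the Lemma \ref{3trees} strategy on the shadow game in $T_{n-2} \by T_{n-1} \by T_n$. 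Symmetrically, after shifting their home to $n-2$ and extending reach over the last three coordinates, the remaining $2\ell - 1$ zombies mimic the survivor in those three coordinates while applying the inductive winning strategy on $\by_{i=1}^{n-3} T_i$. Since $n - 3 \equiv 1 \pmod{3}$ is strictly smaller and $\lceil 2(n-3)/3 \rceil = 2\ell - 1$, the inductive hypothesis supplies exactly this strategy; as in Lemma \ref{moretrees}, the mimic/play interleaving does not disturb the other team's progress.

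The main obstacle I anticipate is the base case $n = 4$ of the $n \equiv 1 \pmod{3}$ sub-induction. Here the ``first $n-3$'' team reduces to a single zombie on a single tree, and I would argue capture using $c(T)=1$ together with the observation that on a tree a zombie's geodesic-moving rule coincides with the optimal shortest-path cop strategy. Combined with the pair's Lemma \ref{3trees} strategy on the remaining three trees and the mimic/play interleaving above, three zombies then suffice on the product of four trees, which completes the sub-induction and hence the theorem.
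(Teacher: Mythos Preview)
Your handling of $n\equiv 0,2\pmod 3$ matches the paper: place the $2\ell$ zombies as $\ell$ adjacent pairs and invoke Lemma~\ref{moretrees}.

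The gap is in your $n\equiv 1\pmod 3$ sub-induction. When the team of $2\ell-1$ zombies has finished extending reach over the last three coordinates and turns to play on $\by_{i=1}^{\,n-3}T_i$, you invoke ``the inductive winning strategy'' there. But the inductive hypothesis you are carrying is the bare theorem statement, which only guarantees a win when the $2\ell-1$ zombies \emph{choose their own initial positions}. After the reach-extension phase their projections onto $\by_{i=1}^{\,n-3}T_i$ are in whatever configuration the game has produced, not in the carefully chosen starting configuration that the smaller case requires. Compare with the recursion inside Lemma~\ref{moretrees}: there the call is to Lemma~\ref{moretrees} itself, whose hypothesis is the portable invariant ``exactly half at even distance,'' and the proof checks that this invariant survives reach-extension and mimicry. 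Your sub-induction has no analogous invariant---with an odd number $2\ell-1$ of zombies there is no ``exactly half'' condition, you never specify how the non-pair zombies are placed initially, and you do not identify what structure among them persists to the moment the recursive strategy is invoked. Concretely, at $n=7$ your three remaining zombies must execute the $n=4$ strategy, which itself needs an opposite-parity pair; nothing in your setup guarantees one.

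The paper sidesteps this by not inducting within the $n\equiv 1$ class. It assigns the extra zombie $Z_{2\ell+1}$ home coordinate $n$ and gives the pairs interleaved home coordinates $3i-2$ and $3i-1$; the common-reach argument then produces a single zombie with all but one coordinate in reach, which forces $S$ to move indefinitely in the first $n-1$ coordinates. Meanwhile $Z_1,\ldots,Z_{2\ell}$ clear the $T_n$ shadow game and then apply Lemma~\ref{moretrees} directly on $\by_{i=1}^{\,n-1}T_i$, where $n-1\equiv 0\pmod 3$ and the $\ell$-on-$x$, $\ell$-on-$y$ initial placement supplies the parity hypothesis. Your route could be salvaged by proving an odd-count analogue of Lemma~\ref{moretrees} (say: among $2\ell+1$ zombies, at least $\ell$ of each parity) and using that as the sub-induction statement, but as written the recursion does not close.
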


\begin{proof}
Let $G =  \by_{i=1}^n T_i$, where $n \ge 2$ and each $T_i$ is a non-trivial tree.  We now consider two cases, based on the value of $n \pmod 3$.  

\medskip

\noindent {\it Case 1:  $n \equiv 0$ or $2 \pmod 3$. }   It follows that for some $\ell \ge 1$, $n = 3 \ell -1$ or $n=3\ell$, and $\left \lceil \frac{2n}{3} \right \rceil = 2\ell$.    Choose vertices $x$ and $y$ such that $x$ and $y$ are adjacent in $G$.   Given $2\ell$ zombies, begin the game by placing half the zombies on $x$ and half the zombies on $y$.  As seen in Lemma \ref{2trees}, any choice of initial position by $S$ will result in exactly half the zombies being an even distance from $S$.  It follows from Lemma \ref{moretrees} that $z(G) \le \left \lceil \frac{2n}{3} \right \rceil$.

\medskip

\noindent {\it Case 2:  $n \equiv 1  \pmod 3$. }    It follows that $\left \lceil \frac{2n}{3} \right \rceil = 2\ell+1$ for some $\ell \ge 1$.    Choose vertices $x$ and $y$ such that $x$ and $y$ are adjacent in $G$.    The game begins by placing zombies $Z_1, \ldots , Z_\ell$ and $Z_{2\ell +1}$ on vertex $x$, and zombies $Z_{\ell +1}, \ldots , Z_{2\ell}$ on $y$.  For each $i = 1, \ldots , \ell$, assign $Z_i$  coordinate $3i-2$ and  assign $Z_{\ell +i}$  coordinate $3i-1$ as their respective home coordinates.  Finally, $Z_{2\ell +1}$ is assigned coordinate $n$ as its home coordinate.  As in the proof of Lemma \ref{moretrees}, there is a coordinate $w$ that, at some point in the game, is within reach of all the zombies.  

If $w = n$ or $w=n-1$, then either $Z_1$ or $Z_{2\ell +1}$, respectively, has captured $S$.   If $w = 3t+1$ or $w=3t$ for some $t$ such that $1 \le t < \ell$, then either $Z_{\ell + t}$ or $Z_{t+1}$, respectively, has captured $S$.  We may, therefore, assume that $w = 3t-1$, where $1 \le t \le \ell$.  Also assume that $S$ has not been captured.
If $t = \ell$, then $Z_{2\ell +1}$ has every coordinate except  $(3\ell)^{th}$ coordinate within its reach.  If $1 \le t <\ell$, then $Z_{t+1}$ has every coordinate with the exception of $3t$ within its reach.  

Without loss of generality, assume we renumber zombies, tress and  coordinates so that Let $Z_{2\ell -1}$ is the zombie from $\{Z_1, \ldots , Z_\ell\} \cup \{Z_{2\ell +1}\}$ that has all but the $n^{th}$ coordinate within its reach.   We now assume that $Z_{2\ell -1}$ moves according to the home strategy, with its home coordinate being the first coordinate.   It follows that $S$ must move indefinitely in the first $n-1$ coordinate.  

Meanwhile, zombies $Z_1, \ldots Z_{2\ell}$ each move in the $n^{th}$ coordinate until they have all captured the survivor in the corresponding shadow game in $T_n$.  Once this is accomplished, they each mimic the survivor's move whenever he moves in the $n^{th}$ coordinate.  When $S$ moves in the first $n-1$ coordinates, or passes, the zombies $Z_1, \ldots Z_{2\ell}$ 
move in the first $n-1$ coordinates according to their winning strategy in Lemma \ref{moretrees}.  Since $Z_1, \ldots Z_{\ell}$ started on vertex $x$, and  $Z_{\ell +1}, \ldots Z_{2\ell}$ started on $y$, exactly half will be an even distance from $S$ in the shadow game on $\by_{i=1}^{n-1} T_i$.   Therefore, by Lemma \ref{moretrees}, the survivor will be captured.

\medskip

Hence, $\zombie ( \by_{i=1}^n T_i) \le \left \lceil \frac{2n}{3} \right \rceil$, and by Lemma \ref{lowerbound}  $\zombie ( \by_{i=1}^n T_i) = \left \lceil \frac{2n}{3} \right \rceil$
\end{proof}

\begin{corollary}
For any set of $n$ non-trivial trees $\{T_i : i=1, \ldots , n\}$, where $n\ge 2$,  $c_{aa} ( \by_{i=1}^n T_i)  = c_{af} ( \by_{i=1}^n T_i) =\left \lceil \frac{2n}{3} \right \rceil$.
\end{corollary}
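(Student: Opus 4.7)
The plan is to sandwich both $c_{aa}$ and $c_{af}$ between two values that are already known to equal $\lceil 2n/3 \rceil$, using the chain of inequalities $c_{aa}(G) \le c_{af}(G) \le \zombie(G)$ noted in the introduction together with the retract lemma and the machinery already developed.

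For the upper bound, I would simply observe that both $c_{aa}(G)$ and $c_{af}(G)$ are at most $\zombie(G)$ for any simple graph $G$, and then invoke Theorem \ref{upperbound} applied to $G = \by_{i=1}^n T_i$ to conclude that $c_{aa}(\by_{i=1}^n T_i) \le c_{af}(\by_{i=1}^n T_i) \le \lceil 2n/3 \rceil$.

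For the lower bound, I would re-use the retraction $F$ constructed in the proof of Lemma \ref{lowerbound}, which exhibits $Q_n$ as a retract of $\by_{i=1}^n T_i$. By Lemma \ref{retract} we then get $c_{aa}(Q_n) \le c_{aa}(\by_{i=1}^n T_i)$. Since giving the robber the ability to pass can only help the robber, $c_{aa}(Q_n) \le c_{af}(Q_n)$ and hence, again by Lemma \ref{retract}, $c_{aa}(Q_n) \le c_{af}(Q_n) \le c_{af}(\by_{i=1}^n T_i)$. Combined with the Offner--Ojakian result $c_{aa}(Q_n) = \lceil 2n/3 \rceil$ cited in the paper, both $c_{aa}(\by_{i=1}^n T_i)$ and $c_{af}(\by_{i=1}^n T_i)$ are bounded below by $\lceil 2n/3 \rceil$.

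There is no real obstacle here: every ingredient has already been assembled. The only point worth stating carefully is that we apply Lemma \ref{retract} separately for $c_{aa}$ and $c_{af}$, since that lemma addresses each parameter on its own. Putting the two bounds together yields $c_{aa}(\by_{i=1}^n T_i) = c_{af}(\by_{i=1}^n T_i) = \lceil 2n/3 \rceil$, as desired.
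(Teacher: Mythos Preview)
Your proposal is correct and follows essentially the same sandwich argument as the paper: the chain $\lceil 2n/3 \rceil = c_{aa}(Q_n) \le c_{aa}(\by_{i=1}^n T_i) \le c_{af}(\by_{i=1}^n T_i) \le \zombie(\by_{i=1}^n T_i) = \lceil 2n/3 \rceil$, using the retract lemma for the lower bound and Theorem~\ref{upperbound} for the upper bound. The paper's proof is exactly this one-line chain; your separate invocation of Lemma~\ref{retract} for $c_{af}$ is harmless but unnecessary, since $c_{aa}(\by_{i=1}^n T_i) \le c_{af}(\by_{i=1}^n T_i)$ already suffices.
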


\begin{proof}
Since $ \left \lceil \frac{2n}{3} \right \rceil = c_{aa} ( Q_n ) \le c_{aa} ( \by_{i=1}^n T_i) \le c_{af} ( \by_{i=1}^n T_i) \le \zombie ( \by_{i=1}^n T_i) = \left \lceil \frac{2n}{3} \right \rceil$, equality is satisfied through the expression. 
\end{proof}

%For a graph $G$, let $x\le_0 y$ if and only if $x=y$.  Furthermore, for each $\alpha >0$,$x\le_\alpha y$ if and only if for each $u \in N[x]$ there exists a $v \in N(y)$ such that $d(v,u)< d(y,u)$ and $u \le_\beta v$ for some $\beta < \alpha$. 

%It follows that if $x$ is dominated by $y$ in $G$, then $x \le_1 y$.  

%After the cop's move, suppose the robber is on $x$ and the cop is on $y$, the cop is adjacent to the robber, but $y$ does not dominate $x$.  If $x \le_2 y$, then for each  $u \in N(x)$ there is a vertex $z \in N(y)$ such that $z$ is either equal to $u$ or $z$ dominates $u$.  

%If $S$ is on an induced 4-cycle, and $Z$ starts at distance 1 but not on a dominating vertex, with it being $S$'s turn and $Z$, then $Z$ only wins if the induced 4 cycle is part of a wheel or the wheel-plus graph.

\end{document}